\theoremstyle{plain}
\newtheorem{theorem}{Theorem}[section]
\newtheorem{prop}[theorem]{Proposition}
\newtheorem{rem}[theorem]{Remark}
\newtheorem{ex}[theorem]{Example}
\renewcommand{\b}{\begin{equation}}
\newcommand{\e}{\end{equation}}
\newcommand{\g}{\mathfrak{g}}
\newcommand\C{{\mathbb C}}
\title{A remark on the Bismut-Ricci form on $2$-step nilmanifolds}
\subjclass[2000]{Primary 53C15 ; Secondary 53B15, 53C30}
\thanks{This work was supported by  by G.N.S.A.G.A. of I.N.d.A.M} 
\address{Dipartimento di Matematica G. Peano \\ Universit\`a di Torino\\
Via Carlo Alberto 10\\
10123 Torino\\ Italy}
\email{mattia.pujia@unito.it, luigi.vezzoni@unito.it}
\author{Mattia Pujia and Luigi Vezzoni}
\date{\today}
\begin{document}
\maketitle
\begin{abstract}
In this note we observe that on a $2$-step nilpotent Lie group equipped with a left-invariant SKT structure the $(1,1)$-part of the Bismut-Ricci form is seminegative definite. As application we give a simplified proof of the non-existence of invariant SKT static metrics 
 on $2$-step nilmanifolds and of the existence of a long time solution to the pluriclosed flow in $2$-step nilmanifolds. 
\end{abstract}

\section{The Bismut Ricci form on $2$-step SKT nilmanifolds}
An Hermitian manifold is called SKT if its fundamental form is $\partial\bar \partial$-closed. The SKT condition can be described in terms of the Bismut connection by requiring that the torsion form is closed. Indeed, on any Hermitian manifold $(M,g)$ there is a  unique Hermitian connection $\nabla$ such that the tensor $c:= g(T(\cdot,\cdot),\dot )$ is skew-symmetric in its entries \cite{B}, where $T$ is the torsion  of $\nabla$. The metric $g$ is SKT if and only if $dc=0$. In this note we focus on the Ricci form of $\nabla$. In analogy to the 
K\"ahler case, the form is defined by 
$$
\rho^B(X,Y)= {\rm tr}_\omega R^B(X,Y,\cdot,\cdot)\,, 
$$
$\omega$ being the fundamental form of $g$ and $R^B$ the curvature tensor of $\nabla$. 

We consider as  manifold $M$ a $2$-step nilpotent Lie group $G$ equipped with an invariant Hermitian structure $(J,g)$. Under these assumptions, the form $\rho^B$ takes the following expression 
\begin{equation}\label{Fund}
\rho^B(X,Y)= i\sum_{r=1}^n g([X,Y],[Z_r,\bar Z_r])\,,\quad \mbox{ for every }X,Y\in \g\,,
\end{equation}
where $\{Z_r\}$ is an arbitrary $g$-unitary frame of the Lie algebra $\g$ of $G$ \footnote{Here we adopt the convection $\omega(\cdot,\cdot)=g(J\cdot,\cdot)$, in contrast to the one adopted in \cite{EFV2}.}. More generally, if $G$ is just a Lie group with an invariant Hermitian structure, $\rho^B$ takes the following expression 
\begin{equation}\label{general}
\rho^B(\omega)(X,Y) = - i \sum_{r=1}^n \left\lbrace g([[X,Y]^{1,0},Z_r],\bar Z_r) - g([[X,Y]^{0,1},\bar Z_r],Z_r ) -g([X,Y], [Z_r, \bar Z_r]) \right\rbrace \, .
\end{equation}

We have the following 
\begin{prop}\label{prop}
Let $G$ be a $2n$-dimensional $2$-step nilpotent Lie group with a left-invariant SKT structure $(J,g)$. Then 
$$
\rho^B(Z,\bar Z)=-i \sum_{r=1}^n \|[Z,\bar Z_r]\|^2
$$
for every $Z\in \mathfrak g^{1,0}$, where $\{Z_r\}$ is an arbitrary unitary frame. In particular, 
$$
\rho^B(X,JX)\leq 0
$$
for every  $X\in \g$.
\end{prop}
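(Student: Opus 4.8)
The plan is to start from \eqref{Fund}, reduce the claim to a single algebraic identity on the Lie bracket that is forced by the SKT hypothesis, and then read off the sign. First I would rewrite the target. Fixing a unitary frame $\{Z_r\}$ of $\g^{1,0}$, formula \eqref{Fund} reads $\rho^B(Z,\bar Z)=i\sum_{r}g([Z,\bar Z],[Z_r,\bar Z_r])$, while on the other side, using that $g$ is the $\C$-bilinear extension of an inner product, that $\overline{[Z,\bar Z_r]}=[\bar Z,Z_r]=-[Z_r,\bar Z]$, and that $\|v\|^2=g(v,\bar v)$, one gets $-i\sum_{r}\|[Z,\bar Z_r]\|^2=i\sum_{r}g([Z,\bar Z_r],[Z_r,\bar Z])$. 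Hence the first assertion of the Proposition is equivalent to
\begin{equation}\label{keyid}
\sum_{r=1}^{n}g([Z,\bar Z],[Z_r,\bar Z_r])=\sum_{r=1}^{n}g([Z,\bar Z_r],[Z_r,\bar Z])\qquad\text{for all }Z\in\g^{1,0}.
\end{equation}
Both sides are Hermitian forms in $Z$, and the right-hand side equals the squared Hilbert--Schmidt norm of $\mathrm{ad}_Z\colon\g^{0,1}\to\g_{\C}$, hence is independent of the unitary frame; so it suffices to prove \eqref{keyid} for $Z=Z_1$ in an arbitrary unitary frame.

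The heart of the argument is to derive \eqref{keyid} from the SKT condition. Since $[\g,\g]$ is central, the only brackets occurring in \eqref{keyid} are of mixed type $[\g^{1,0},\g^{0,1}]$; the brackets $[Z_i,Z_j]$, which may be nonzero, do not appear. I would then use SKT in the form $dc=0$ for the Bismut torsion $3$-form $c$, which on a Lie group with invariant Hermitian structure is an explicit combination of $[\cdot,\cdot]$ and $J$: because $[\g,\g]$ is central, evaluating $dc=0$ on the $4$-tuples $(Z_1,Z_r,\bar Z_1,\bar Z_r)$ and summing over $r$ kills all but finitely many terms, and --- after reindexing and using $2$-step nilpotency --- what remains is exactly \eqref{keyid}. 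Equivalently, one expands $\partial\bar\partial\omega=0$ with $\omega=i\sum_{s}\zeta^s\wedge\bar\zeta^s$ in the coframe $\{\zeta^r\}$ dual to $\{Z_r\}$ into a quadratic relation among the structure constants and contracts it with $\sum_{r}\zeta^1\wedge\zeta^r\wedge\bar\zeta^1\wedge\bar\zeta^r$. This is the step I expect to be the real obstacle: the reduction above is routine, but the full SKT relation couples the $[Z_i,Z_j]$ and the $[Z_i,\bar Z_j]$ contributions, and the work is to check that the contraction relevant here collapses to the bracket-only identity \eqref{keyid}; organising the computation through $dc=0$ on the adapted frame, where $2$-step nilpotency annihilates most terms at once, is what should keep this manageable.

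Finally, for the sign statement, given $X\in\g$ I would put $Z=\tfrac{1}{2}(X-iJX)\in\g^{1,0}$, so that $X=Z+\bar Z$ and $JX=i(Z-\bar Z)$; since $\rho^B$ is a $2$-form one has $\rho^B(X,JX)=-2i\,\rho^B(Z,\bar Z)$, and the formula just established gives $\rho^B(X,JX)=-2\sum_{r}\|[Z,\bar Z_r]\|^2\le 0$.
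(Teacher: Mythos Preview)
Your proposal is correct and follows essentially the same route as the paper: both reduce the claim, via \eqref{Fund}, to an algebraic identity on brackets and obtain that identity by evaluating the SKT condition $\partial\bar\partial\omega=0$ on suitable $(1,0)/(0,1)$ arguments. The only difference is that the paper computes $\partial\bar\partial\omega(Z,\bar Z,W,\bar W)$ directly for a single $W\in\g^{1,0}$ and obtains the \emph{unsummed} identity $g([Z,\bar Z],[W,\bar W])=-g([Z,\bar W],[\bar Z,W])$, after which setting $W=Z_r$ and summing gives your \eqref{keyid}; in that direct expansion the two $[Z,W]$-type terms $ig([Z,W],[\bar Z,\bar W])$ and $-ig([\bar Z,\bar W],[Z,W])$ cancel on the nose by symmetry of $g$, so the ``real obstacle'' you anticipate does not materialise.
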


\begin{proof}
Let $Z$ and $W$ be vector fields of type $(1,0)$ on $\g\otimes \C$ and let $\omega$ be the fundamental form of $g$. Then we directly compute 
$$
\begin{aligned}
\partial\bar\partial\omega(Z,\bar Z,W,\bar W)=&-\bar\partial\omega([Z,\bar Z],W,\bar W)
+\bar\partial\omega([Z,W],\bar Z,\bar W)-\bar\partial\omega([Z,\bar W],\bar Z, W)\\
&-\bar\partial\omega([\bar Z, W],Z, \bar W)+\bar\partial\omega([\bar Z,\bar W], Z, W)
-\bar\partial\omega([W,\bar W], Z, \bar Z)\\
=&-\bar\partial\omega([Z,\bar Z]^{0,1},W,\bar W)
+\bar\partial\omega([Z,W],\bar Z,\bar W)-\bar\partial\omega([Z,\bar W]^{0,1},\bar Z, W)\\
&-\bar\partial\omega([\bar Z, W]^{0,1},Z, \bar W)+\bar\partial\omega([\bar Z,\bar W], Z, W)
-\bar\partial\omega([W,\bar W]^{0,1}, Z, \bar Z)\\
=&-\omega([Z,\bar Z]^{0,1},[W,\bar W]^{1,0})
+\omega([Z,W],[\bar Z,\bar W])-\omega([Z,\bar W]^{0,1},[\bar Z, W]^{1,0})\\
&-\omega([\bar Z, W]^{0,1},[Z, \bar W]^{1,0})+\omega([\bar Z,\bar W], [Z, W])
-\omega([W,\bar W]^{0,1},[Z, \bar Z]^{1,0})\\
=&+i g([Z,\bar Z]^{0,1},[W,\bar W]^{1,0})+ig([Z,W],[\bar Z,\bar W])+ig([Z,\bar W]^{0,1},[\bar Z, W]^{1,0})\\
&+ig([\bar Z, W]^{0,1},[Z, \bar W]^{1,0})-ig([\bar Z,\bar W], [Z, W])
+ig([W,\bar W]^{0,1},[Z, \bar Z]^{1,0})\\
=&+i g([Z,\bar Z],[W,\bar W])+ig([Z,\bar W],[\bar Z, W])\,.
\end{aligned}
$$
The SKT assumption $\partial\bar \partial \omega =0$ implies 
$$
g([Z,\bar Z],[W,\bar W])=-g([Z,\bar W],[\bar Z, W])\,. 
$$
Therefore in view of \eqref{Fund} we get 
$$
\rho^B(Z,\bar Z)= i\sum_{r=1}^n g([Z,\bar Z],[Z_r,\bar Z_r])=-i \sum_{r=1}^n g([Z,\bar Z_r],[\bar Z,Z_r]) \,,
$$
being $\{Z_r\}$ an arbitrary unitary frame, and the claim follows. 
\end{proof}

\begin{rem}
{\em Another description of the Bismut-Ricci form on $2$-step nilmanifolds can be found in \cite{AL}.}
\end{rem}

Next we  observe that in general the form  $\rho^B$ is not seminegative definite if we drop the the assumption on $G$ to be nilpotent or on the metric to be SKT.

\begin{ex}{\em 
Let $\g$ be the solvable unimodular Lie algebra with structure equations 
$$
de^1=0,\quad de^2=-e^{13}\,,\quad  de^3=e^{12},\quad de^4=-e^{23} \, ,
$$
equipped with the  complex structure $Je_1=e_4$ and $Je_2=e_3$ and the SKT metric 
$$
g= \sum_{r=1}^4e^{r}\otimes e^{r}+\frac12 (e^{1}\otimes e^{3}+e^{3}\otimes e^{1})-\frac{1}{2}(e^{2}\otimes e^{4}+e^{4}\otimes e^{2})  \, .
$$
By using \eqref{general} with respect to a unitary frame $\{Z_r\}$ we easily get 
$$
\rho^B=\frac23 e^{12}-\frac23 e^{13}+\frac43 e^{23} \,.
$$
In particular 
$$
\rho^B(e_2,Je_2) =  \frac43 \quad \mbox{ and }\quad  \rho^B(4e_1+e_2,J(4e_1+e_2)) = -\frac 43
$$
which implies that  $\rho^B$ is not seminegative definite as $(1,1)$-form.}
\end{ex}

\begin{ex}{\em
Let $(\g,J)$ be the 2-step nilpotent Lie algebra with structure equations
$$
de^1=de^2=de^3=0\,, \quad de^4=e^{12},\quad de^5=-e^{23},\quad de^6=e^{13} \, ,
$$ 
and equipped with the complex structure  $Je_1=e_2$, $Je_3=e_4$ and $Je_5=e_6$ and the non-SKT metric 
$$
g= \sum_{r=1}^6e^{r}\otimes e^{r}+\frac12 (e^{3}\otimes e^{6}+e^{6}\otimes e^{3})-\frac{1}{2}(e^{4}\otimes e^{5}+e^{5}\otimes e^{4})  \, .
$$
Again by using \eqref{general} with respect to a unitary frame $\{Z_r\}$ we easily get 
$$
\rho^B=-e^{12}-\frac12 e^{23} \,,
$$
which implies that  $\rho^B$ is not seminegative definite as $(1,1)$-form.
}
\end{ex}
\section{Non-existence of invariant SKT metrics  satisfying $(\rho^B)^{1,1}=\lambda \omega $} 

In this section  we observe that our proposition \ref{prop} easily implies that on a  $2$-step nilpotent Lie groups there are no SKT invariant metrics such that
$$
(\rho^B)^{1,1}=\lambda\omega 
$$ 
for some constant $\lambda$. This result is already known: the case $\lambda=0$ was studied in \cite{E}, while the case $\lambda\neq 0$ follows from \cite{EFV}.

Indeed in the setting of proposition \ref{prop}, if we assume $(\rho^B)^{1,1}=\lambda \omega $, then, taking into account that the center of $G$ in not trivial, formula \eqref{Fund} 
implies $\lambda=0$ and from proposition \ref{prop} it follows  $[\g^{1,0},\g^{0,1}]=0$. Therefore if $\{\zeta^k\}$ is a unitary co-frame in $\g$ we have  
$$
\bar \partial \zeta^k=0
$$
and we can write 
$$
\partial \zeta^k=c^{k}_{rs}\zeta^r\wedge \zeta^s\,.
$$
for some $c_{rs}^k$ in $\mathbb C$. 
Then 
$$
\partial \bar \partial \omega=i\partial \bar \partial \left(\sum_{k=1}^n\zeta^k\wedge \bar\zeta^k\right)=-i\partial  \left(\sum_{k=1}^n\bar{c}_{rs}^k\zeta^k\wedge \bar\zeta^{r}\wedge \bar \zeta^s\right)=-i\sum_{k=1}^nc^{k}_{ab}\bar{c}_{rs}^k\zeta^a\wedge \zeta^b\wedge \bar\zeta^{r}\wedge \bar \zeta^s
$$
and the SKT assumption implies that all the $c^k_{rs}$'s vanish in contrast to the assumption on $G$ to be not abelian.

\section{Long-time existence of the pluriclosed flow on $2$-step nilmanifolds}
The pluriclosed flow (PCF) is a parabolic flow of Hermitian metrics which preserves the SKT condition. The flow is defined on an SKT manifold $(M,\omega)$ as 
$$
\partial_t\omega_t=-(\rho^B_{\omega_t})^{1,1}\,,\quad \omega_{|t=0}=\omega\,,
$$
where $\rho^B_{\omega_t}$ is computed with respect to $\omega_t$ and the superscript \lq\lq $1,1$\rq\rq\, is the 
$(1,1)$-component with respect to $J$. The flow was introduced in \cite{streets-tian2} and then investigated in \cite{boling,streets-tian2,streets-tian4,Streets1,Streets} and it is a powerful tool in SKT geometry.   

In \cite{EFV2} it is proved that on a $2$-step nilpotent Lie group the flow has always a long-time solution for any initial invariant datum. The proof makes use of the bracket flow device introduce by Lauret in \cite{lauret}.  

In our setting, let $G$ be a $2$-step nilpotent Lie group with a left-invariant  complex structure $J$ and consider the PCF equation starting form an invariant SKT form $\omega$. The solution $\omega_t$ 
holds invariant for every $t$ and, therefore, the flow can be regarded as on ODE on $\Lambda^2\g^*\otimes \g$, where $\g$ is the Lie algebra of $G$. The bracket flow device consists in evolving the Lie bracket on $\g$ instead of the form $\omega$. For this purpose one considers the bracket variety $\mathcal A$ consisting on the elements $\lambda\in \Lambda^2\g^*\otimes \g$ such that 
\begin{eqnarray}
&& \lambda(\lambda(X,Y),V))=0\,,\\
&& \lambda(JX,JY)-J\lambda(JX,Y)-J\lambda(X,JY)-\lambda(X,Y)=0\,,\\
&& \partial_\lambda\bar \partial_\lambda\omega=0\,. 
\end{eqnarray} 
for every $X,Y,V\in \g$, where the operators $\partial_\lambda$ and $\bar \partial_\lambda$ are computed by using the bracket $\lambda$.
Any $\lambda\in \mathcal A$ gives a structure of $2$-step nilpotent Lie algebra to $\g$ such that $(J,\omega)$ is a SKT structure.  It turns out that the PCF is equivalent to a bracket flow type equation, i.e. an ODE in $\mathcal A$. The equivalence between the two equations is obtained by evolving the initial bracket $\mu$ of $\g$ as
$$
\mu_t(X,Y)=h_{t}\mu(h_t^{-1}X,h_t^{-1}Y)\,,\quad X,Y\in \g\,,
$$ 
being $h_t$ the curve in ${\rm End}(\g)$ solving 
$$
\frac{d}{dt}h_t=-\frac12 h_tP_{\omega_t}\,,\quad h_{|t=0}={\rm I}
$$
and $P_{\omega_t}\in {\rm End}(\g)$ is defined by
$$
\omega_t(P_{\omega_t}X,Y)=\frac12\left(\rho^B_{\omega_t}(X,Y)+\rho^B_{\omega_t}(JX,JY)\right)\,.
$$
The form $\omega_t$ reads in terms of $h_t$ as 
$$
\omega_t(X,Y)=\omega(h_tX,h_tY)\,. 
$$ 
Now in view of formula \eqref{Fund}
$$
\rho^B_{\omega_t}(X,\cdot)=0\,\mbox{ for every }X\in \xi
$$
and then $\omega_t(X,\cdot)=\omega(X,\cdot)$ for every $X\in \xi$, where $\xi$ is the center of $\mu$. 
Let $\xi^{\perp}$ be the $g$-orthogonal complement of $\xi$ in $\g$ and let $g_t$ be the Hermitian metric corresponding to the solution to the PCF equation starting from $\omega$. Then 
$$
\frac{d}{dt}g_t(X,\cdot)=0 \,\mbox{ for every }X\in \xi
$$
and $g_t$ preserves the splitting $\g=\xi\oplus \xi^\perp$ and the flow evolves only the component of $g$ in $\xi^\perp\times \xi^\perp$. It follows that $h_t$ preserves the splitting $\g=\xi\oplus \xi^{\perp}$ and 
$$
 h_{t|\xi}={\rm I}_{\xi}\,. 
$$
Since $(\g,\mu)$ is $2$-step nilpotent, then $\mu(X,Y)\in \xi$ for every $X,Y\in \g$ and 
$$
\mu_t(X,Y)=\mu(h_{t}^{-1}X,h_t^{-1}Y)\,.
$$
Therefore 
\begin{multline*}
\frac{d}{dt}\mu_t(X,Y)=-\mu(h_{t}^{-1}\dot{h}_t h_{t}^{-1}X,h_t^{-1}Y)-\mu(h_{t}^{-1}X,h_{t}^{-1}\dot{h}_t h_{t}^{-1}Y)\\
=-\mu_t(\dot{h}_t h_{t}^{-1}X,Y)-\mu_t(X,\dot{h}_t h_{t}^{-1}Y)=\frac12\mu_t(P_{\mu_t}X,Y)+\frac12\mu_t(X,P_{\mu_t}Y) \, ,
\end{multline*}
where for any $\lambda\in \mathcal A$ we set 
$$
\omega(P_\lambda X,Y)=i\frac12\sum_{r=1}^n\left( g(\lambda(X,Y),\lambda(Z_r,\bar Z_r))+g(\lambda(JX,JY),\lambda(Z_r,\bar Z_r))\right) 
$$
being $\{Z_r\}$ an arbitrary $g$-unitary frame and in the last step we have used 
$$
h_tP_{\omega_t}=P_{\mu_t}h_t\,.
$$
Hence the bracket flow equations writes as 
$$
\frac{d}{dt}\mu_t(X,Y)=\frac12\mu_t(P_{\mu_t}X,Y)+\frac12\mu_t(X,P_{\mu_t}Y)\,,\quad \mu_{|t=0}=\mu
$$
and its solution satisfies 
$$
\frac{d}{dt}g(\mu_t,\mu_t)=2g(\dot\mu_t,\mu_t)=4\sum_{r,s=1}^{2n}g(\mu_t(P_{\mu_t}e_r,e_s),\mu_t(e_r,e_s))
$$
being $\{e_r\}$ an arbitrary $g$-orthonormal frame. In view of proposition \ref{prop} all the eigenvalues of any $P_{\mu_t}$ are nonpositive. Fixing $t$ and taking as $\{e_r\}$ an orthonormal basis of eigenvectors of $P_{\mu_t}$ we get 
$$
\frac{d}{dt}g(\mu_t,\mu_t)=4\sum_{r,s=1}^{2n}a_r g(\mu_t(e_r,e_s),\mu_t(e_r,e_s))\leq 0\,.
$$ 
Therefore $\frac{d}{dt}g(\mu_t,\mu_t)\leq 0$ and the PCF is defined in $[0,\infty)$. 

\bigbreak\noindent{\it Acknowledgements.} The authors are grateful to Anna Fino for very useful conversations.

\end{document}